\newcommand{\no}{\noindent}
\newcommand\bigzero{\makebox(0,0){\text{\huge0}}}
\newcommand{\be}{\begin{equation}} 
\newcommand{\bea}{\begin{eqnarray}}
\newcommand{\ee}{\end{equation}}
\newcommand{\beas}{\begin{eqnarray*}}
\newcommand{\eea}{\end{eqnarray}}
\newcommand{\eeas}{\end{eqnarray*}}
\newcommand{\non}{\nonumber}
\def\Z{{\mathbb Z}}
\def\F{{\mathbb F}}
\def\Q{{\mathbb Q}}
\def\z3{{\mathbb Z_3}}
\def \zp{{\mathbb Z_p}}
\def \zn{{\mathbb Z_n}}
\def \zq{{\mathbb Z_q}}
\def \zm{{\mathbb Z_m}}
\newtheorem{theorem}{Theorem}[section]
\newtheorem{definition}[theorem]{Definition}
\newtheorem{corollary}[theorem]{Corollary}
\renewcommand{\baselinestretch}{1.3}
\begin{document}
\begin{center}
\large
{\bf On H-Spaces and a Congruence of Catalan Numbers}

\normalsize

\vskip .3cm

Tamar Friedmann$^{1,3}$ and John R. Harper$^2$

\vskip .3cm
\footnotesize

{\it $^1$ Department of Mathematics and Statistics, Smith College

$^2$Department of Mathematics, University of Rochester

$^3$ Department of Physics and Astronomy, University of Rochester

}

\normalsize

\vskip .3cm

{\bf Abstract }
\end{center}

\renewcommand{\baselinestretch}{1.2}
\small
For $p$ an odd prime and $F$ the cyclic group of order $p$, we show that  
 the number of conjugacy classes of embeddings of $F$  in $SU(p)$ such that no element of $F$ has 1  as an eigenvalue is $(1+C_{p-1})/p$,  where $C_{p-1}$ is a Catalan number.  We prove that the only coset space $SU(p)/F$ that admits a $p$-local $H$-structure is the classical Lie group $PSU(p)$. We also show that $SU(4)/\z3$, where $\z3$ is embedded off the center of $SU(4)$,
is a novel example of an $H$-space, even globally. We apply our results to the study of homotopy classes of maps from $BF$ to $BSU(n)$. 

\vskip .2cm
\no {\sl 2010 MSC Codes:} 05A15, 55P45, 05E15, 11A07, 11B50.

\section{Introduction}
\renewcommand{\baselinestretch}{1.3}
\normalsize

In \cite{FS} conjugacy classes of elements of finite order dividing $m$ in $SU(n)$ are identified and counted.  That result poses the question whether any of these elements can be used to produce new finite $H$-spaces with non-trivial fundamental group.

Let $F$ denote a subgroup of $SU(n)$ that is cyclic of finite order dividing $m$. Since $SU(n)$ is the union of conjugacy classes of a maximal torus, we may assume that $F$ is contained in a maximal torus of $SU(n)$.
In order for  a coset space $SU(n)/F$ to admit the structure of an $H$-space, there are restrictions on the values of $n$ and $m$. The basic topological restriction comes from W. Browder's work on differential Hopf-algebras \cite{B}.  In particular, 
if $p$ is a prime that divides $m$, then 
from Theorem 4.7 of \cite{B} it follows that 
the rational cohomology of $SU(n)$ must have a generator in some dimension of the form $2p^f-1$.  Recall that the generators for $SU(n)$ have odd dimensions $2k-1$ with $k$ between $2$ and $n$.  Thus, the minimal case that satisfies the restriction  is $f=1$ and $m=n=p$, and we consider this case, as well as the case $f=1$, $m=3$, $n=4$.

When $m=n=p$, we show that it is only when $F$ is the center of $SU(p)$ that $SU(p)/F$ is an $H$-space (Theorem \ref{onlycenter}), yet we find a new  example of an $H$-space when  $m=3$ and $n=4$ (Theorem \ref{z3su4}). 
As far as we know, 
previous examples of non-classical finite (possibly $p$-local) $H$-spaces with a non-trivial fundamental group have used the center of $SU(n)$ as an ingredient \cite{H, KK}. But the center of $SU(4)$ is $\Z _4$, so our example is off-center.

Borel's structure theorem for $H$-spaces, which plays a central role in the proof of Theorem \ref{onlycenter}, requires that when $m=p$, for $SU(p)/F$ to be an $H$-space the embedding $t$ of $F$ in $SU(p)$  is such that no eigenvalue of $t(g)$, $g\in F$ is equal to 1. Generalizing the eigenvalue condition to any $SU(n)/\zm$ leads us to count, a-la 
\cite{FS}, the number of conjugacy classes of elements of finite order dividing $m$ in $SU(n)$ with no eigenvalue equal to $1$, for any pair of positive integers $m$ and $n$ (Theorem \ref{formulathm}). We call these {\sl special} conjugacy classes.

The combinatorics for the class of cases where $n=m=p$ for $p>3$ yields many examples of embeddings of $F$ not conjugate to the center of $SU(p)$, for which -- as we show -- $SU(p)/F$ is not an $H$-space. The combinatorics also contains a pleasant surprise; the number of special conjugacy classes of $\zp$ subgroups in $SU(p)$ is given by $(1+C_{p-1})/p$ where $C_n$ is the $n$-th Catalan number (Theorem \ref{ppformulathm}; a related observation involving a pair of distinct primes $p$ and $q$ appears in Theorem \ref{pqformulathm}.). 
As far as we know, the first observation that this expression yields an integer appears in OEIS \#A098796, submitted by F. Chapoton; our result may be viewed as a proof of this fact. The related fact that $p\nmid C_{p-1}$ is well-known \cite{AK}. 
 
\enlargethispage{\baselineskip}
The combinatorial count in Theorem \ref{formulathm} is of additional interest
based on a theorem proved independently by W. Dwyer and C. Wilkerson \cite{DW} and by A. Zabrodsky \cite{Z} and D. Notbohm \cite{N}.  
The theorem asserts (among other things) that up to homotopy, essential maps of $BF$ to $BSU(n)$ are in one-to-one correspondence with conjugacy classes of non-constant homomorphisms of $F$ to $SU(n)$, where $F$ is a cyclic group of prime
order $p$.  Hence Theorem \ref{formulathm} and its corollary count the number of essential maps of $BF$ to $BSU(n)$ that do
not factor through $BSU(n-1)$.  We draw attention to the fact that other results in \cite{FS} may be interpreted in a similar manner.

As of this writing, the general question of whether coset spaces of the form $ SU(n)/F$ are $H$-spaces for any $n$ and for $F$ a cyclic group of any order $m$
remains open. We do expect, however, that in addition to the $SU(4)/\z3$ that we already found there are other examples of $H$-spaces of the form $SU(n)/F$ that do not use the center of $SU(n)$ as an ingredient.

Throughout this paper, $\zn$ shall denote the cyclic group of order $n$.

\section{The coset spaces $SU(p)/F$}\label{ProofH}

The Borel structure theorem for finite $H$-spaces \cite{Bo} states that for $p$ an odd prime, the mod $p$ cohomology algebra is a tensor product of exterior algebras on odd degree  elements and a truncated polynomial algebra on even degree elements with truncation at a power of $p$. We use it in what follows.

\begin{theorem} \label{onlycenter}
Let $p$ be an odd prime and let $F\subset SU(p)$ be a cyclic subgroup of order $p$. Then $H^*(SU(p)/F ; \F_p)$ satisfies the Borel structure theorem for finite $H$-spaces if and only if $F$ is the center of $SU(p)$. 
\end{theorem}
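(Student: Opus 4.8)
The plan is to compute $H^*(SU(p)/F;\mathbb{F}_p)$ explicitly enough to test the Borel structure theorem, and to see that only the central embedding passes the test. First I would set up the fibration $F \to SU(p) \to SU(p)/F$, or better, pass to the associated fibration $SU(p) \to SU(p)/F \to BF$ and use the Serre spectral sequence with $\mathbb{F}_p$ coefficients. Since $F\cong \mathbb{Z}/p$, we have $H^*(BF;\mathbb{F}_p) = \mathbb{F}_p[y]\otimes \Lambda(x)$ with $|x|=1$, $|y|=2$, and $H^*(SU(p);\mathbb{F}_p) = \Lambda(a_3,a_5,\dots,a_{2p-1})$. The key structural input is the transgression: the generator $a_{2p-1}$ of top degree is the one that can interact with the polynomial class $y$, and the differential $d_{2p}(a_{2p-1})$ is (up to a unit) a polynomial in $y$ whose vanishing or non-vanishing is controlled by the eigenvalue data of the embedding $t\colon F\hookrightarrow SU(p)$. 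Concretely, $d_{2p}(a_{2p-1}) = c\, y^{p}$ would be wrong dimensionally; rather the relevant transgression is $d_3(a_3)$-type behavior combined with the action of the Steenrod operation $P^1$, which carries $a_3$-related classes up toward degree $2p-1$. The real mechanism: $P^1$ applied to the degree-$2$ class $y$ gives $y^p$, and the fibration forces a relation linking $y^p$ to whether the representation $t$ restricted to $F$ omits the eigenvalue $1$ with the right multiplicities.

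The cleanest route, which I expect the authors take, is to realize $SU(p)/F$ via the representation $t$ as a quotient and compare cohomology to the known case $F = Z(SU(p))$, where $SU(p)/F = PSU(p)$ has well-understood $\mathbb{F}_p$-cohomology (a truncated polynomial algebra on a degree-$2$ class tensored with an exterior algebra, satisfying Borel). So the strategy is: (1) By the earlier reduction (Browder + Borel), assume $t$ has no eigenvalue equal to $1$; the eigenvalue multiplicities are then a composition of $p$ into parts indexed by the nontrivial characters of $F$. (2) Run the Serre spectral sequence of $SU(p)\to SU(p)/F \to BF$; the $E_2$-page is $H^*(BF;\mathbb{F}_p)\otimes \Lambda(a_3,\dots,a_{2p-1})$. (3) Identify all differentials: the class $a_3$ is related to the first Chern-class data and transgresses (or not) according to $t$; inductively the classes $a_{2k-1}$ transgress to the $k$-th power-sum symmetric function in the eigenvalue characters evaluated on $y$. (4) The punchline is that $H^*(SU(p)/F;\mathbb{F}_p)$ contains a truncated polynomial algebra on the image of $y$ truncated at height $p$ if and only if all lower power sums vanish mod $p$, i.e.\ only when the eigenvalues are precisely the $p$-th roots of unity each with multiplicity one — which is exactly the central embedding. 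For any other $t$, some intermediate differential is nonzero (or some Steenrod operation produces a class violating the truncated-polynomial/exterior tensor form), so the Borel structure theorem fails.

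The main obstacle, and the step requiring genuine care, is step (3)–(4): pinning down exactly which power-sum conditions arise and showing that they force the eigenvalue multiset to be the full set of $p$-th roots of unity with multiplicity one. This is where the combinatorics meets the algebra — one must show that the vanishing mod $p$ of the relevant symmetric functions of a multiset of $p$-th roots of unity (summing to the right total with no eigenvalue $1$) characterizes the regular/central configuration. I would handle it by working in $\mathbb{F}_p[y]/(y^p - y)$ or the appropriate quotient reflecting $P^1 y = y^p$, expressing the transgression targets as elementary symmetric polynomials $e_k$ of the characters, and invoking that $\prod(1 - \zeta^{j} y)$ over the multiset equals $(1-y)^{?}\cdots$; the constraint "$H^*$ is a truncated polynomial algebra" translates to this product being $1 - y^{p-1}$ (up to units), which holds iff the exponents are a complete residue system minus $\{0\}$. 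Verifying that Steenrod-operation compatibility doesn't permit any exotic non-Borel algebra in the other cases — i.e.\ ruling out accidental isomorphisms of graded algebras — is the subtle endgame; I expect one argues that the Poincaré series alone, or the failure of $P^1$ to act as required on the degree-$2$ generator, already obstructs it.
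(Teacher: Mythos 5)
Your overall framework is the same as the paper's: run the Serre spectral sequence of $SU(p)\to SU(p)/F\to BF$ (via the Borel construction), read off the transgressions from the pullback of characteristic classes along $Bt$, use finiteness of $H^*(SU(p)/F;\F_p)$ to force a nonzero differential, and use Borel's structure theorem to force the truncation height to be $p$. But the endgame (your steps (3)--(4)) contains genuine errors. First, the transgression targets are the pullbacks of the \emph{Chern classes}, i.e.\ elementary symmetric polynomials: $(Bt)^*c=\prod_i(1+m_iy)$, so $f_{2k-1}$ is tied to $\sigma_k(m_1,\ldots,m_p)\,y^k$, not to power sums. For $k\le p-1$ this slip is harmless (Newton's identities are invertible below $p$), but at the crucial top degree it is fatal: the $p$-th power sum satisfies $\sum_i m_i^p\equiv\sum_i m_i\equiv 0 \pmod p$ by Fermat, so on your normalization the class in degree $2p-1$ could never transgress nontrivially and the finiteness argument you need could not be run, whereas $\sigma_p=\prod_i m_i$ is nonzero exactly when no eigenvalue is $1$.

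Second, your characterization of the admissible eigenvalue configuration is wrong, and with it the identification of the central embedding. The condition the spectral sequence forces is $\sigma_k\equiv 0$ for $1\le k\le p-1$ and $\sigma_p\not\equiv 0$; over $\F_p$ this says $\prod_i(x-m_i)=x^p-c$ with $c\neq 0$, and since $x^p-c=(x-c^{1/p})^p$ in $\F_p[x]$, unique factorization gives $m_1=\cdots=m_p$, i.e.\ $t(g)$ is a scalar matrix and $F$ is the center. Your claimed answer --- eigenvalues equal to all $p$-th roots of unity with multiplicity one --- is not the central embedding: it contains the eigenvalue $1$, it is the regular (off-center) configuration, and it fails the vanishing conditions, since $\prod_{j=0}^{p-1}(x-j)=x^p-x$ gives $\sigma_{p-1}\equiv -1\not\equiv 0$; correspondingly the target identity is $\prod_i(1+m_iy)=1+cy^p$, not $1-y^{p-1}$ (which has only $p-1$ factors and cannot come from an element of $SU(p)$). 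Two smaller points: no Steenrod operations are needed anywhere, and the ``no eigenvalue $1$'' condition is not an external assumption you may import --- it is derived inside the argument (if some $m_i\equiv 0$ then $\sigma_p\equiv 0$, so the first nonzero transgression occurs at some $k\le p-1$, producing a truncated polynomial algebra of height $k$ that is not a power of $p$, contradicting Borel).
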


\begin{proof} Let $\omega = e^{2\pi i/p}$ and let $g$ be a generator for $F$. 
 We describe $F\subset SU(p)$ in terms of the following diagram. 

\begin{center}
\label{diagram1}
  \raisebox{-0.5\height}{\includegraphics[height=1.1in]{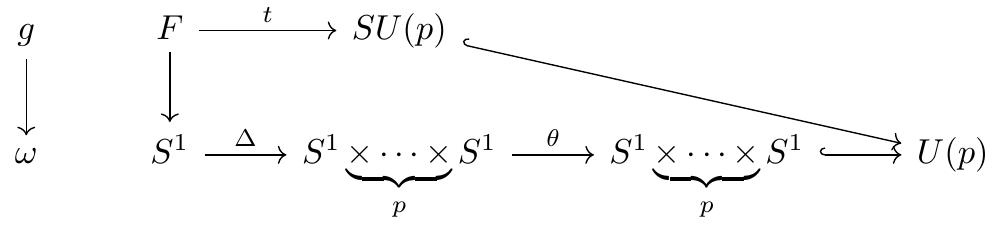}}
\end{center}
\no where $\Delta(z)=(z,z,\ldots , z)$ and $\theta (z_1, \ldots , z_p)=(z_1^{m_1}, \ldots , z_p^{m_p})$ with $\sum _{i=1}^p m_i \equiv 0 \mod p$. 
Then $t (g)$ is the diagonal matrix
\[ \begin{pmatrix} \omega^{m_1} & &  & \bigzero  \\
&\omega^{m_2}& &  \\
&&\ddots &  \\
\bigzero&&&\omega^{m_p}
\end{pmatrix}.
\]

\no We follow the argument on p. 314 of \cite{BB} applied to the commutative diagram of principal fiber bundles 
\be \label{cd}
  \raisebox{-0.5\height}{\includegraphics[height=1.7in]{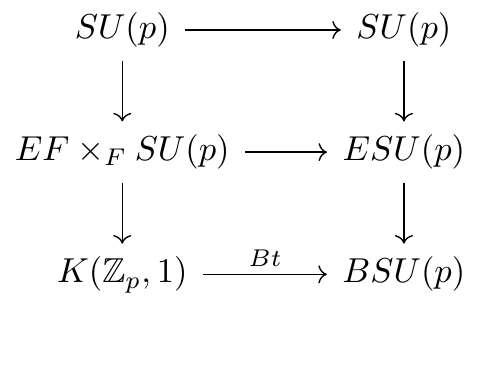}}
\ee
with the Borel construction in place of $SU(p)/F$ in the left column. Computing $(Bt ) ^*$ on the total Chern class $c$ we have 
\[(Bt)^* (c)=(1+m_1 y_2)(1+m_2 y_2) \cdots (1+m_p y_2)=1+ \sum _{k=1}^p \sigma_k(\overline m)y_2^k\]
\vskip -.3cm
\no and
\[ (Bt)^*c_k = \sigma _k (\overline m)y_2^k\; ,\]
where $\sigma _k$ is the $k$-th elementary symmetric polynomial in $p$ variables, $\overline m = (m_1, m_2, \ldots , m_p)$, $c_k$ is the $k$-th Chern class, and $y_2$ generates $H^2(K(\zp  , 1); \F _p)$. 

Now $(Bt)^* c_1=0$ because $\sigma_1(\overline m)=\sum m_i \equiv 0 \mod p$ as $F\subset SU(p)$.  The first non-trivial differential in the Serre spectral sequence for the left side of diagram (\ref{cd}) is determined by the smallest value of $k$ where $(Bt)^* c_k\neq 0$. If 1 were an eigenvalue of $t(g)$ (i.e some $m_i\equiv 0 \mod p$) then $k\leq p-1$. Assuming $k\leq p-1$, we next show that the Borel structure theorem is not satisfied by the mod $p$ cohomology of $SU(p)/F$. 

We have 
\begin{align*}
E_2^{*,*}&=H^*(K(\zp, 1);\F_p)\otimes H^*(SU(p);\F_p) \non \\ 
&\equiv  \Lambda (x_1)\otimes \zp [y_2] \otimes \Lambda (f_3, \ldots , f_{2p-1}),
\end{align*}
\vskip -.4cm
\no and our hypothesis is 
\[d_{2k-1}(f_{2k-1})=\lambda y_2^k  \hskip .3cm \text{ where } \lambda \not\equiv 0 \mod p.\]
\vskip -.2cm
\no Then 
\[ E_{2k}^{*,*}=\Lambda (x_1)\otimes \zp [y_2]/y_2^k \otimes \Lambda (f_3,\ldots , \hat f_{2k-1},  \ldots , f_{2p-1})\]
omitting the generator $f_{2k-1}$ , 
and 
\[ E_{2k}^{*,*}=E_{\infty}^{*,*}=E_{\infty}^{*,0}\otimes E_{\infty}^{0,*}\]
for dimension reasons and by inspection. The bidegrees of the generators are 
\vskip -.2cm
\[ x_1\in (1,0), \hskip .5cm y_2\in (2,0), \hskip .5cm f_i\in (0, 2i-1) .\]
Thus the mod $p$ cohomology algebra of the total space is generated by elements in these bidegrees. It follows that this algebra is the homology of the free algebra on these generators subject to the differential $d_{2k-1}f_{2k-1}=y_2^k$. If $k\neq p$, this algebra fails to satisfy Borel's theorem. If all the differentials are $0$ then $E_{\infty}^{*,*}$ is not a finite algebra. Thus, $f_{2p-1}$ transgresses to a non-zero multiple of $y_2^p$ and $m_i$ is not congruent to $0 \mod p$ and likewise for the $p$-th symmetric product.

Thus 1 is not an eigenvalue of $t(g)$, $\sigma _k(\overline m)\equiv 0 \mod p$ for $1\leq k \leq p-1$, and $\sigma _p(\overline m)\not\equiv 0 \mod p$. 

It now follows from the unique factorization for polynomials in $\F_p(x)$ that $m_1=m_2 = \cdots = m_p$, so that $F$ is the center of $SU(p)$.
\end{proof}

\begin{corollary} The only quotient of $SU(p)$ by a finite group of order $p$ that yields an $H$-space is $PSU(p)$. 
\end{corollary}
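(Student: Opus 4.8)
The plan is to read the Corollary off Theorem \ref{onlycenter} with only a little bookkeeping. Let $p$ be the odd prime of Theorem \ref{onlycenter} and let $F\subset SU(p)$ be a subgroup of order $p$. Since $p$ is prime, $F$ is cyclic of order $p$, so Theorem \ref{onlycenter} applies to it directly. First I would note that the left-translation action of $F$ on $SU(p)$ is free, so $SU(p)/F$ is a closed connected smooth manifold, hence a connected finite $CW$ complex. Consequently, if $SU(p)/F$ admits an $H$-structure --- or even merely a $p$-local one, which is all the abstract claims --- then the cup-product algebra $H^*(SU(p)/F;\F_p)$ acquires the structure of a connected, finite-dimensional Hopf algebra over $\F_p$, and therefore satisfies the Borel structure theorem for finite $H$-spaces recalled at the start of this section.

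By Theorem \ref{onlycenter}, this last conclusion forces $F$ to be the center $Z(SU(p))$, which for $p$ prime is exactly the order-$p$ group of scalar matrices $e^{2\pi i j/p}I$, $0\le j<p$. Hence the coset space in question is $SU(p)/Z(SU(p)) = PSU(p)$. In the other direction, $PSU(p)$ is a compact connected Lie group and therefore a genuine (global, a fortiori $p$-local) $H$-space; so $PSU(p)$ does occur, and it is the unique quotient of $SU(p)$ by an order-$p$ subgroup that is an $H$-space.

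The only point requiring care, rather than any genuine difficulty, is the reduction carried out in the first paragraph: one must be sure that an $H$-structure on $SU(p)/F$, or merely on its $p$-localization, really does endow $H^*(\,\cdot\,;\F_p)$ with the Hopf-algebra structure and the connectivity/finiteness needed to invoke Borel's theorem in the odd-primary form used in Theorem \ref{onlycenter}. Once that is in hand, the Corollary is immediate.
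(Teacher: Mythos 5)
Your argument is correct and is essentially the paper's: the corollary is read off directly from Theorem \ref{onlycenter}, using that an order-$p$ subgroup is automatically cyclic, that the mod $p$ cohomology of a finite (or $p$-local) $H$-space must satisfy Borel's structure theorem, and that conversely $PSU(p)$ is a Lie group and hence an $H$-space. No genuinely different route is taken, and the bookkeeping you supply is exactly what the paper leaves implicit.
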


\section{Embeddings of $\zm$ in $SU(n)$ and Catalan numbers} \label{counting}

Here we count the number of ways in which an element of order $m$  can be embedded in $SU(n)$ such that 1 is not an eigenvalue. For the case where $m$ and $n$ are both prime, we also count the number of ways the group $\zm$ can be embedded in $SU(n)$ with
the condition that 1 is not an eigenvalue of any generator of $\zm$.

Let $N'(SU(n),m)$ be the number of conjugacy classes of elements of $SU(n)$ of order $m$, none of whose eigenvalues is 1.\footnote{The notation $N(SU(n),m)$ is reserved for  the number of conjugacy classes of all elements of $SU(n)$ of order $m$ (including those with eigenvalue $1$); it has been computed in \cite{Dj, FS}.}

Let $F_1$, $F_2$ be $\zm$ subgroups of $SU(n)$. We say $F_1$ and $F_2$ are in the same conjugacy class if there exists an element $g\in SU(n)$ such that for any $h\in F_1$, $ghg^{-1}\in F_2$.

\begin{definition} A conjugacy class of $\zm$ subgroups of $SU(n)$ such that no element of the subgroups has eigenvalue 1 is called  {\sl special}. The number of such classes is denoted $SpCG(SU(n), \zm)$.
\end{definition}

Note that each conjugacy class, including special ones, has at least one representative all of whose elements are diagonal. 

\begin{theorem}\label{formulathm} For any positive integers $m$ and $n$, we have 
\be \label{formula} N'(SU(n),m)= {1\over m} \left ( \sum _{d|(m,n)}\phi(d) {m/d+n/d-1\choose n/d}-\sum_{d|(m,n-1)}\phi(d){m/d+{n-1\over d}-1\choose {n-1\over d}}\right ) ,\ee
where $\phi$ is Euler's totient function. 
\end{theorem}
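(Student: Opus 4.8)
The plan is to convert the statement into a counting problem about multisets in $\Z/m\Z$, settle it by a one-line bijection, and reduce the remaining bookkeeping to a standard root-of-unity filter. Throughout I read ``of order $m$'' as ``$g^m=1$'' (i.e.\ order dividing $m$); since \eqref{formula} contains no M\"obius inversion this reading is forced, and the count of elements of exact order $m$ is recovered afterwards by M\"obius inversion over the divisors of $m$. Every element of finite order in $SU(n)$ is conjugate into a fixed maximal torus, and two torus elements are $SU(n)$-conjugate precisely when their diagonal entries agree up to a permutation (the Weyl group is $S_n$). With $\omega=e^{2\pi i/m}$, the conjugacy class of a $g$ with $g^m=1$ is therefore recorded by the multiset $\{a_1,\dots,a_n\}\subseteq\Z/m\Z$ of exponents of its eigenvalues $\omega^{a_i}$; the condition $\det g=1$ says $a_1+\dots+a_n\equiv 0\pmod m$, and ``$1$ is not an eigenvalue'' says $a_i\not\equiv 0$ for every $i$. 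Hence $N'(SU(n),m)=A_n$, the number of $n$-multisets drawn from $\{1,\dots,m-1\}$ whose sum is $\equiv 0\pmod m$. Write $B_k$ for the number of $k$-multisets drawn from all of $\Z/m\Z$ with sum $\equiv 0\pmod m$; this is (up to the order-dividing convention) the quantity already evaluated in \cite{Dj,FS}, namely
\[
B_k=\frac1m\sum_{d\mid(m,k)}\phi(d)\binom{m/d+k/d-1}{k/d}.
\]

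Next I would prove $A_n=B_n-B_{n-1}$ by an explicit bijection. An $n$-multiset counted by $B_n$ either contains no copy of $0$ --- in which case it is one of the $A_n$ zero-free multisets --- or it contains at least one $0$. In the latter case, delete exactly one copy of $0$: the sum is unchanged, hence still $\equiv 0$, and the map ``delete one $0$'' is a bijection from these multisets onto the set counted by $B_{n-1}$, with inverse ``adjoin one $0$''. Therefore $A_n=B_n-B_{n-1}$, and substituting the formula above for $B_n$ and for $B_{n-1}$ (whose divisor conditions are $d\mid(m,n)$ and $d\mid(m,n-1)$ respectively) yields \eqref{formula} verbatim.

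To make the argument self-contained one can derive the formula for $B_k$ rather than cite it, via a root-of-unity filter: with $\zeta=e^{2\pi i/m}$,
\[
B_k=[x^k]\,\frac1m\sum_{j=0}^{m-1}\ \prod_{a=0}^{m-1}\frac{1}{1-x\zeta^{ja}},
\]
and for fixed $j$, putting $d=m/\gcd(j,m)$, the numbers $\zeta^{ja}$ with $0\le a<m$ run through the $d$-th roots of unity each $m/d$ times, so $\prod_a(1-x\zeta^{ja})=(1-x^{d})^{m/d}$ and the inner factor expands as $\sum_{t\ge0}\binom{m/d+t-1}{t}x^{dt}$. Grouping the $\phi(d)$ indices $j$ with $\gcd(j,m)=m/d$ for each $d\mid m$ gives the displayed expression for $B_k$. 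In fact the filter can be applied directly to $A_n$: since $\prod_{a=1}^{m-1}(1-x\zeta^{ja})=(1-x^{d})^{m/d}/(1-x)$, one gets $A_n=[x^n]\,\frac1m\sum_{j}(1-x)(1-x^{d})^{-m/d}$ (again with $d=m/\gcd(j,m)$), and the factor $1-x$ is exactly the generating-function shadow of the delete/adjoin-a-$0$ bijection, reproducing \eqref{formula} in one stroke.

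The argument is short, and I expect the only place needing care to be the dictionary in the first paragraph: that $SU(n)$-conjugacy classes of order-dividing-$m$ elements correspond bijectively to sum-zero $n$-multisets in $\Z/m\Z$, and that forbidding the eigenvalue $1$ is exactly forbidding the residue $0$. Everything after that is the two-line bijection $A_n=B_n-B_{n-1}$ together with the classical evaluation of $B_k$; the genuinely interesting arithmetic --- that the resulting $\phi(d)$-sums collapse to $(1+C_{p-1})/p$ when $m=n=p$ --- is the content of the later Theorem~\ref{ppformulathm} and is not needed here.
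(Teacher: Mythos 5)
Your proposal is correct and follows essentially the same route as the paper: the dictionary between special conjugacy classes and zero-free $n$-multisets in $\Z/m\Z$ with sum $\equiv 0$, followed by the root-of-unity filter and the expansion of $(1-x)(1-x^d)^{-m/d}$, is exactly the paper's generating-function argument, with your delete/adjoin-a-$0$ bijection being just a combinatorial restatement of the paper's factor $(1-x)$. The only cosmetic difference is that you make the ``order dividing $m$'' convention explicit, which the paper leaves implicit.
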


\begin{proof} Let 
$$ F(x,t)=\prod _{k=1}^{m-1} \left (\sum_{a=0}^\infty (t^k x)^a \right ).
$$
A typical term in $F(x,t)$ is 
\[x^{\sum n_k}\, t^{\sum kn_k} ,\] 
where $n_k$, $k=1, \ldots , m-1$ are non-negative integers. If $\sum n_k=n$ and $\sum
kn_k\equiv 0$ mod $m$ then the sequence $\{ n_k\}$ corresponds to a
diagonal $SU(n)$ matrix of order $m$ with eigenvalue $e^{2\pi ik/ m}$ repeated $n_k$ times. Since we excluded $k=0$, the matrix does not have $1$ as an eigenvalue. Thus, such a sequence $\{ n_k\}$ corresponds to a conjugacy class counted by $N'(SU(n),m)$.

To pick out the terms in $F(x,t)$ for which $\sum kn_k\equiv 0$ mod
$m$, thereby obtaining a generating function for $N'(SU(n),m)$, let $\zeta = \exp {2\pi i /m}$ and recall  
\[{1\over m}\sum _{j=0}^{m-1}\zeta ^{jb}=\left \{ \begin{array}{l} 1,\
  \mbox{ if } m \mid b  \\ 0,\ \mbox{ else} \end{array} \right . .
\]
Define
$$ G(x)={1\over m}\sum_{j=0}^{m-1}F(x,\zeta^j).$$
A typical term in $G(x)$ is now
\[ {1\over m}\left (x^{\sum n_k}\, \zeta^{j\sum kn_k}\right ) ,\] 
so the sum over $j$ picks out the terms with $\sum kn_k\equiv 0$ mod $m$, giving
$$ G(x)=\sum_n N'(SU(n),m)x^n\, .$$
We have then
$$G(x)={1\over m}\sum_{j=0}^{m-1}\prod _{k=1}^{m-1} \left (\sum_{a=0}^\infty (\zeta^{jk} x)^a \right ) = {1\over m}\sum_{j=0}^{m-1}\prod _{k=1}^{m-1} \left ({1\over 1-\zeta^{jk}x}\right ).$$
\no The factorization $1-x^d=\prod_{l=0}^{d-1} (1-\zeta^{jl}x)$ for $\zeta ^j$ a primitive $d^{th}$ root of unity gives 
$$ \prod_{\ell=0}^{m-1} (1-\zeta^{j\ell}x)=(1-x^d)^{m/d}= (1-x)\prod_{\ell=1}^{m-1}(1-\zeta^{j\ell}x).
$$
Together with the fact that $\zeta ^j$, $j=0, \ldots, m-1$ is a primitive $d^{th}$ root of unity
$\phi (d)$ times, we obtain
\beas G(x)&=&{1\over m}\sum_{d|m}\phi(d){1-x\over (1-x^d)^{m/d}}\\ &=& {1\over m}\sum_{d|m}\phi(d) (1-x)\sum_{b\geq 0}{m/d+b-1\choose b}x^db\\ &=&{1\over m}\sum_{d|m}\phi(d) \left [ \sum _{b\geq 0}{m/d+b-1\choose b}x^{db}-\sum _{b\geq 0}{m/d+b-1\choose b}x^{db+1} \right ].
\eeas
The coefficient of $x^n$ in $G(x)$ will come from values of $d$ such that $db=n$ in the first term and $db+1=n$ in the second term, leading to $d\mid n$ and $d\mid n-1$, respectively, as in equation (\ref{formula}).
\end{proof}

\begin{corollary}\label{nqnumber} If $p$ is a prime and $n$ is any positive integer, then 
\[  N'(SU(n),p)=  {1\over p} \left [{p+n-2\choose n} + (p-1)\alpha (n,p)\right ] 
\]
where  
\[ \alpha (n,p)=\begin{cases} 1 &\mbox{if } p\mid n \; ,\\
-1 & \mbox{if } p \mid n-1 \; , \\
0& \mbox{otherwise.} \end{cases}
\]

\end{corollary}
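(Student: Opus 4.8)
The plan is to specialize Theorem \ref{formulathm} to $m=p$ and use that a prime $p$ has exactly the two divisors $1$ and $p$. Consequently $(p,n)\in\{1,p\}$ and $(p,n-1)\in\{1,p\}$, and since $\gcd(n,n-1)=1$ the prime $p$ cannot divide both $n$ and $n-1$. So there are precisely three cases to run through: $p$ divides neither $n$ nor $n-1$; $p\mid n$; and $p\mid n-1$. These are exactly the three cases in the definition of $\alpha(n,p)$, with values $0$, $1$, and $-1$ respectively.

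The only algebraic input needed is Pascal's rule in the form $\binom{p+n-1}{n}-\binom{p+n-2}{n-1}=\binom{p+n-2}{n}$, which produces the main term of the claimed formula. In each case I would simply read off the relevant summands of formula (\ref{formula}). When $p\nmid n(n-1)$, both sums collapse to their $d=1$ terms, namely $\binom{p+n-1}{n}$ and $\binom{p+n-2}{n-1}$, and the stated identity gives $N'(SU(n),p)=\tfrac1p\binom{p+n-2}{n}$. When $p\mid n$, the divisor $d=p$ contributes to the first sum the extra summand $\phi(p)\binom{p/p+n/p-1}{n/p}=(p-1)\binom{n/p}{n/p}=p-1$, while the second sum is unchanged; combining with Pascal's rule gives $\tfrac1p\bigl(\binom{p+n-2}{n}+(p-1)\bigr)$. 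When $p\mid n-1$, the symmetric argument puts the extra $(p-1)$ into the subtracted sum, giving $\tfrac1p\bigl(\binom{p+n-2}{n}-(p-1)\bigr)$. In all three cases the result matches the asserted value of $(p-1)\alpha(n,p)$.

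There is essentially no obstacle here; the one point deserving a line of care is that at $d=p$ the binomial coefficients $\binom{p/d+n/d-1}{n/d}$ and $\binom{p/d+(n-1)/d-1}{(n-1)/d}$ degenerate to $\binom{n/p}{n/p}=1$ and $\binom{(n-1)/p}{(n-1)/p}=1$, so that each such term contributes exactly $\phi(p)=p-1$. This is the origin of the factor $(p-1)$ in the statement, and once this is observed the corollary is immediate from Theorem \ref{formulathm}.
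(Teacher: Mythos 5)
Your proposal is correct and follows the same route as the paper, which simply states that the corollary follows from equation (\ref{formula}); your case analysis ($p\mid n$, $p\mid n-1$, or neither, these being mutually exclusive since $\gcd(n,n-1)=1$), the observation that the $d=p$ terms degenerate to $\phi(p)=p-1$, and the use of Pascal's rule $\binom{p+n-1}{n}-\binom{p+n-2}{n-1}=\binom{p+n-2}{n}$ is exactly the specialization the paper leaves to the reader.
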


\begin{proof} Follows from equation (\ref{formula}).
\end{proof}

In particular, if $p$ and $q$ are distinct primes, we have

\be  N'(SU(p),q)= \left \{ \begin{array}{ll} {1\over q}{q+p-2\choose p} & \text{if } q \nmid  ( p-1) \\  \label{pq}\\
{1\over q}\left [ {q+p-2\choose p}-(q-1)\right ] & \text{if } q\mid (p-1)  \end{array} \right. ,\ee
and if $q=p$, we have
\be \label{ppnumber} N'(SU(p),p)={1\over p} \left [ p-1+ {2p-2\choose p} \right ]. \ee

\begin{theorem}\label{pqformulathm} If $p$ and $q$ are distinct primes
and $p\nmid q-1$ then

\be \label{pandqeqn} SpCG(SU(p), \zq)= \left \{ \begin{array}{ll} {1\over (q-1)q}{q+p-2\choose p}={(q+p-2)!\over p!q!} & \text{if } q \nmid  ( p-1) , \\ \\
{1\over (q-1)q}\left [ {q+p-2\choose p}-(q-1)\right ] & \text{if } q\mid (p-1) .
 \end{array} \right. \ee
\vskip .2cm
\no Further, if $p=2$ and $m$ is any positive integer, $SpCG(SU(2), \Z_m)=1$. 
\end{theorem}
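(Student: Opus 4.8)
The plan is to pass from counting conjugacy classes of \emph{elements} of order $q$ to counting conjugacy classes of \emph{subgroups} $\zq$ by exploiting the natural action of $(\Z/q)^\times \cong \zq$-automorphisms on the set of special conjugacy classes. Concretely, fix a special $\zq$-subgroup $F \subset SU(p)$ and a generator $g$. The elements $g, g^2, \dots, g^{q-1}$ are all generators of $F$, each of order $q$, and none has $1$ as an eigenvalue (since raising a diagonal matrix with no eigenvalue equal to $1$ to a power coprime to $q$ permutes the nontrivial $q$-th roots of unity among the eigenvalues, never producing a $1$). So every special $\zq$-subgroup gives rise to exactly $q-1$ special \emph{element} conjugacy classes — provided no two of the powers $g^a$ and $g^b$ are conjugate in $SU(p)$ for $a \ne b$. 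This last point is where the hypothesis $p \nmid q-1$ enters: I would show that if $g^a$ were conjugate to $g^b$, then the multiset of eigenvalue-multiplicities $(n_1, \dots, n_{q-1})$ of $g$ would be invariant under the permutation $k \mapsto (a^{-1}b)k \bmod q$ of the index set $\{1,\dots,q-1\}$; since this permutation has order dividing $q-1$ and acts on $p = \sum n_k$ "boxes", a nontrivial such symmetry forces the orbit sizes to divide into $p$, and because every nontrivial element of $(\Z/q)^\times$ acting on $\{1,\dots,q-1\}$ has all orbits of the same size $>1$ dividing $q-1$, one gets a divisor of $q-1$ dividing $p$, contradicting $p \nmid q-1$ unless the symmetry is trivial. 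Hence the $(\Z/q)^\times$-action on special element-classes is \emph{free}, with orbits of size exactly $q-1$, and each orbit is precisely the set of generator-classes of one special subgroup.

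From this, $SpCG(SU(p),\zq) = N'(SU(p),q)/(q-1)$, and substituting the value of $N'(SU(p),q)$ from equation (\ref{pq}) — namely $\tfrac1q\binom{q+p-2}{p}$ when $q \nmid p-1$, and $\tfrac1q\bigl[\binom{q+p-2}{p}-(q-1)\bigr]$ when $q \mid p-1$ — yields the two displayed formulas, including the clean factorial form $\tfrac{(q+p-2)!}{p!\,q!}$ in the first case since $\tfrac1{(q-1)q}\binom{q+p-2}{p} = \tfrac1{(q-1)q}\cdot\tfrac{(q+p-2)!}{p!\,(q-2)!} = \tfrac{(q+p-2)!}{p!\,q!}$.

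For the final assertion, $SU(2)$ has rank $1$, so a $\Z_m$-subgroup is determined up to conjugacy by a diagonal generator $\mathrm{diag}(\zeta, \zeta^{-1})$ with $\zeta$ a primitive $m$-th root of unity; the ``no eigenvalue $1$'' condition simply says $\zeta \ne 1$, i.e. the subgroup is all of the unique $\Z_m$ in a maximal torus. Any two primitive $m$-th roots are related by $\zeta \mapsto \zeta^{-1}$ via the Weyl group, and these generate the same subgroup, so there is exactly one special conjugacy class of $\Z_m$-subgroups: $SpCG(SU(2),\Z_m)=1$.

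The main obstacle I anticipate is the freeness argument in the first paragraph — precisely pinning down why $p \nmid q-1$ prevents any nontrivial power-automorphism of $\zq$ from fixing a special conjugacy class. The cleanest route is the orbit-counting observation: the cyclic group $(\Z/q)^\times$ acts on $\{1,\dots,q-1\}$ with all orbits of a common size $e \mid q-1$ for each subgroup of order $e$, so an eigenvalue-multiplicity vector fixed by such a subgroup has its support a union of orbits of size $e$, forcing $e \mid p$; since $e \mid q-1$ and $\gcd(p, q-1) = 1$ (as $p$ is prime and $p \nmid q-1$), we get $e = 1$, i.e. only the identity automorphism fixes anything. One should double-check the edge behavior when the multiplicity vector has some $n_k = 0$, but the orbit-union condition handles that uniformly.
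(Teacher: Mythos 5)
Your proposal is correct and follows essentially the same route as the paper: show that the $q-1$ generators of a special $\Z_q$-subgroup lie in pairwise distinct $SU(p)$-conjugacy classes, by noting that conjugacy of $g$ and $g^t$ forces the eigenvalue-multiplicity vector to be constant on multiplication-by-$t$ orbits in $(\Z/q)^\times$, whose common size divides both $p$ and $q-1$ and hence is $1$ when $p\nmid q-1$; then divide $N'(SU(p),q)$ from equation (\ref{pq}) by $q-1$, with the same direct treatment of the $SU(2)$ case.
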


\begin{proof} We first show that 
all the $(q-1)$ generators of all $\zq$ groups are in distinct conjugacy classes of $SU(p)$.  
It follows that we may divide the expressions in equation (\ref{pq}) by $(q-1)$ and obtain the result. 

Let $F$ be a $\zq$ subgroup of $SU(p)$, and let $h\in F$. Let $\zeta = \exp{2\pi i /q}$ and let $n_k$, $k=1, \ldots , q-1$, be the multiplicity of $\zeta ^k$ as an eigenvalue of $h$ (so $\sum n_k = p, \sum kn_k\equiv 0 \mod q$). 

Suppose $h^t$ is in the same conjugacy class as $h$ for some integer $t$. Then the multiplicity of the eigenvalues of $h^t$ is the same as that for $h$. It follows that $n_{kt}=n_k$ for all $k$ (all indices are taken mod $q$). It also follows that $h^{t^l}$ is in the same conjugacy class as $h$ for any integer $l$ (if $h^t=ghg^{-1}$ for some $g\in SU(p)$, then  $h^{t^l} = g^lhg^{-l}$). Therefore, $n_{kt^l}=n_k$ for any $l$ and $k$. 

Let $c$ be the order of $t$, i.e. the smallest integer such that $t^c\equiv 1 \mod q$ ($c$ exists because $q$ is prime). We have  $n_1=n_t=n_{t^2}=\cdots =n_{t^{c-1}}$, all indices being distinct $\mod q$. Let $A_{i_1}=\{1, t, t^2, \ldots , t^{c-1}\} $.  Now pick any $i_2\notin A_{i_1} $ and let $A_{i_2}=\{i_2, i_2t, i_2t^2, \ldots , i_2t^{c-1} \}$. Then $A_{i_2}$ has $c$ distinct elements $\mod q$, as $i_2t^{\ell_1}\equiv i_2t^{\ell_2} \mod q$ implies  $\ell_1-\ell_2\geq c$. Further, $A_{i_2}$ does not intersect with $A_{i_1}$, as $t^{\ell_1}\equiv i_2t^{\ell_2} \mod q$ implies $i_2\equiv t^{\ell_1-\ell_2} \mod q$, i.e. $i_2\in A_{i_1}$, a contradiction. We see that the set of $n_k$'s can be partitioned into several subsets of size $c$, with all the $n_k$'s  within each subset equal to each other. 

\enlargethispage{\baselineskip}
Since $\sum _k n_k = p$, it follows that $p$ is divisible by $c$, so $c=1$ or $c=p$. If $c=1$ then $t\equiv 1 \mod q$ and there are no conjugacies. If $c=p$ then 
since $c$ is the order of a subgroup of $\Z_q^*$, we have $c\mid q-1$, so $p\mid q-1$ which we assumed is not the case. 

For $p=2$, if $m$ is any positive integer there is exactly one conjugacy class of a $Z_m$ subgroup generated by $\text{diag}(\zeta, \zeta^{m-1})$ where $\zeta=e^{2\pi i\over q}$. So $SpCG(SU(2),\Z_m)=1$. 
%
%
\end{proof}
\begin{theorem} \label{ppformulathm}If $p$ is prime, then

\be \label{pp} SpCG(SU(p), \zp)= {1\over p} (1+C_{p-1}) \, , \ee
where 
\[ C_{p-1}={1\over p}{2p-2 \choose p-1}
\]
is the $(p-1)$th Catalan number. 

\end{theorem}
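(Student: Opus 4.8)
The plan is to reduce the statement to equation (\ref{ppnumber}) by combining a bijective count with an elementary binomial identity. First I would set up the correspondence between special $\zp$ subgroups and the conjugacy classes counted by $N'(SU(p),p)$. Since $p$ is prime, every non-identity element of a $\zp$ subgroup $F\subset SU(p)$ is a generator, so $F$ is special precisely when each of its $p-1$ generators has no eigenvalue equal to $1$; conversely, if $h\in SU(p)$ has order $p$ and no eigenvalue $1$, then — all $p$th roots of unity other than $1$ being primitive, as $p$ is prime — every power $h^j$ with $1\le j\le p-1$ again has no eigenvalue $1$, so $\langle h\rangle$ is special. Thus the classes counted by $N'(SU(p),p)$ are exactly the conjugacy classes of generators of special $\zp$ subgroups, and $[h]\mapsto[\langle h\rangle]$ is a well-defined surjection from these element classes onto the special subgroup classes. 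Granting that every fiber has size exactly $p-1$, I would conclude $N'(SU(p),p)=(p-1)\,SpCG(SU(p),\zp)$.

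Next I would prove the fiber claim, which is the argument from the proof of Theorem \ref{pqformulathm} specialized to $q=p$. Two generators of a special subgroup have the form $h$ and $h^t$ with $\gcd(t,p)=1$; if they were $SU(p)$-conjugate, then writing $n_k$ for the multiplicity of $e^{2\pi i k/p}$ as an eigenvalue of $h$, one gets $n_{kt}=n_k$ for all $k$ (indices mod $p$), so $\Z_p^*$ decomposes into $\langle t\rangle$-orbits, all of size $c$ (the multiplicative order of $t$ modulo $p$), with $n_k$ constant on each orbit; hence $c$ divides $\sum_k n_k=p$, while also $c\mid|\Z_p^*|=p-1$, so $c\mid\gcd(p,p-1)=1$ and $t\equiv1\pmod p$. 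Therefore the $p-1$ generators of a special subgroup lie in $p-1$ distinct classes, and generators of non-conjugate subgroups lie in distinct classes because $g h_1 g^{-1}=h_2$ forces $g\langle h_1\rangle g^{-1}=\langle h_2\rangle$. Note that, unlike in Theorem \ref{pqformulathm}, no exceptional case can arise here, since $\gcd(p,p-1)=1$ immediately rules out the nontrivial orbit size.

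Finally I would compute. By equation (\ref{ppnumber}),
\[
SpCG(SU(p),\zp)=\frac{1}{p-1}\,N'(SU(p),p)=\frac{1}{p(p-1)}\left[(p-1)+\binom{2p-2}{p}\right].
\]
Using the standard Catalan identities $\binom{2p-2}{p-1}=p\,C_{p-1}$ and $C_{p-1}=\binom{2p-2}{p-1}-\binom{2p-2}{p}$ gives $\binom{2p-2}{p}=(p-1)C_{p-1}$, and therefore
\[
SpCG(SU(p),\zp)=\frac{1}{p(p-1)}\left[(p-1)+(p-1)C_{p-1}\right]=\frac{1}{p}(1+C_{p-1}),
\]
as claimed. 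The only step with genuine content is the fiber-size claim of the second paragraph — and that is essentially the easiest case of the argument already used for Theorem \ref{pqformulathm}; the passage to Catalan numbers is the elementary identity $\binom{2p-2}{p}=(p-1)C_{p-1}$.
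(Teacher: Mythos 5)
Your proof is correct and follows essentially the same route as the paper, whose proof simply invokes the argument of Theorem \ref{pqformulathm} with $q$ replaced by $p$: the orbit argument shows the $p-1$ generators of each special subgroup lie in distinct conjugacy classes, so one divides Eq. (\ref{ppnumber}) by $p-1$. Your write-up just makes explicit the fiber-size bookkeeping and the identity $\binom{2p-2}{p}=(p-1)C_{p-1}$, which the paper leaves implicit.
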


\begin{proof} The same proof as the previous theorem holds, with $q$ replaced by $p$.
\end{proof}

\vskip .5cm
\begin{corollary} \label{catalanequiv} For any prime $p$, we have
\be  C_{p-1} \equiv -1 \mod p \, .\ee
\end{corollary}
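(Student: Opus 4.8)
The plan is to read the congruence off directly from Theorem~\ref{ppformulathm} by an integrality argument, so that no genuinely new work is required. The key point is that $SpCG(SU(p),\zp)$ is, by its very definition, the cardinality of a set of conjugacy classes, hence a non-negative integer; the set is finite because $SU(p)$ has only finitely many conjugacy classes of elements of order dividing $p$. Theorem~\ref{ppformulathm} identifies this integer with $(1+C_{p-1})/p$, and therefore $p \mid 1 + C_{p-1}$, which is exactly the assertion $C_{p-1} \equiv -1 \pmod p$. For $p = 2$ the statement is the triviality $C_1 = 1 \equiv -1 \pmod 2$, consistent with $SpCG(SU(2),\Z_2) = 1$ from Theorem~\ref{pqformulathm}.

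So the only thing I would actually write down is this one-line deduction, together with a reminder of why the left side of (\ref{pp}) is literally an integer. Everything substantive has already been established upstream: the enumeration $N'(SU(p),p) = \frac1p\bigl[p-1 + \binom{2p-2}{p}\bigr]$ of equation~(\ref{ppnumber}), the identity $\binom{2p-2}{p} = (p-1)\,C_{p-1}$ (so that $N'(SU(p),p) = \frac{p-1}{p}(1+C_{p-1})$), and the orbit count of Theorem~\ref{pqformulathm} which divides by the $p-1$ generators of a $\zp$ subgroup. For the reader who wants an independent check I would also record the elementary computation: from $\binom{2p}{p} = \frac{2(2p-1)}{p}\binom{2p-2}{p-1}$ one gets $C_{p-1} = \frac{1}{2(2p-1)}\binom{2p}{p}$, and since $\binom{2p}{p} \equiv 2 \pmod p$ and $2p-1 \equiv -1 \pmod p$, this gives $C_{p-1} \equiv -1 \pmod p$ for odd $p$. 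This recovers and sharpens the classical fact \cite{AK} that $p \nmid C_{p-1}$.

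There is essentially no obstacle here; the difficulty was all in proving the closed form of Theorem~\ref{ppformulathm}. If one preferred a self-contained combinatorial proof of the Corollary that invokes no topology, the natural route is to make explicit the free $\Z_p$-action behind Theorem~\ref{pqformulathm}: let a generator of $\Z_p$ act on a multiplicity vector $(n_1,\dots,n_{p-1})$ by $n_k \mapsto n_{gk}$ for a fixed primitive root $g$ modulo $p$, verify (exactly as in the proof of Theorem~\ref{pqformulathm}) that on the non-central special classes this action is fixed-point free, add back the single central class, and reduce $N'(SU(p),p)$ modulo $p$ to force $1 + C_{p-1} \equiv 0 \pmod p$. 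Either presentation uses only inputs already available in the paper, so the write-up should be short in any case.
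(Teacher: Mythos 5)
Your proposal is correct and follows the same route as the paper: Corollary~\ref{catalanequiv} is deduced exactly as you say, from the integrality of the conjugacy-class count $SpCG(SU(p),\zp)=(1+C_{p-1})/p$ in Theorem~\ref{ppformulathm}. The extra elementary check via $\binom{2p}{p}$ is fine but not needed.
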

\begin{proof} Follows from Eq. (\ref{pp}). 
\end{proof}

\no For all $p\geq 5$, the number given by Eq. (\ref{pp}) is larger than 1 and blows up quickly: for $p=5,7,11, \ldots $ it is $3, 19, 1527, \ldots $ .\footnote{As it happens, this series appears in the Online Encyclopaedia of Integer Sequences as A098796, submitted there by F. Chapoton in 2004 \cite{Ch}.} For $p=5$, the resulting pair of subgroups are generated by diag$(\zeta, \zeta, \zeta, \zeta^3, \zeta^4)$ and diag$(\zeta, \zeta, \zeta^2, \zeta^2, \zeta^4)$, where $\zeta = e^{2\pi i/ 5}$. Generators for the 18 groups at $p=7$  appear in the appendix.

\begin{corollary} \label{pandqequiv}For any primes $q$ and $p$ such that $q\; |\; p-1$, 
\be {1\over q-1}{q+p-2\choose p} \equiv 1 \mod q . \ee
\[ \] 
\end{corollary}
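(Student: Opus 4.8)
The plan is to read the congruence straight off Theorem \ref{pqformulathm}, where all of the arithmetic content has already been produced. First I would check that the hypotheses there apply. Since $q$ and $p$ are primes with $q\mid p-1$, we have $2\le q\le p-1$; in particular $q\neq p$, and $1\le q-1\le p-2<p$, so $p\nmid q-1$. Thus we are precisely in the second case of equation (\ref{pandqeqn}), and
\[
 S \;:=\; SpCG(SU(p),\zq) \;=\; \frac{1}{(q-1)q}\left[\binom{q+p-2}{p}-(q-1)\right]
\]
is a non-negative integer, since it counts conjugacy classes of special subgroups.

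Next I would merely rearrange this identity. Multiplying through by $(q-1)q$ gives $(q-1)q\,S=\binom{q+p-2}{p}-(q-1)$, hence
\[
 \binom{q+p-2}{p} = (q-1)\bigl(qS+1\bigr).
\]
In particular $q-1$ divides $\binom{q+p-2}{p}$, so $\frac{1}{q-1}\binom{q+p-2}{p}$ is a genuine integer, namely $qS+1$, and this is $\equiv 1\pmod q$. That is exactly the asserted congruence.

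I expect no real obstacle here: the entire number-theoretic force was already extracted in Theorem \ref{pqformulathm}, whose proof in turn rests on the combinatorial count in Theorem \ref{formulathm}. This corollary is just the observation that the integrality of $SpCG(SU(p),\zq)$ unpacks into this particular divisibility-and-congruence statement — in the same spirit in which Corollary \ref{catalanequiv} records what integrality of $SpCG(SU(p),\zp)$ forces on the Catalan number $C_{p-1}$. (One could instead try a self-contained manipulation of $\binom{q+p-2}{p}$ via Lucas' theorem using $q\mid p-1$, but tracking the base-$q$ digits of $q+p-2$ is messier than invoking the already-proved count, so the route above is the one I would take.)
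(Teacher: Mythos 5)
Your proposal is correct and is essentially the paper's own argument: the paper proves this corollary by reading it off the second case of equation (\ref{pandqeqn}) in Theorem \ref{pqformulathm}, exactly as you do. Your added checks — that $q\mid p-1$ forces $q\neq p$ and $p\nmid q-1$ so the theorem applies, and that integrality of $SpCG(SU(p),\zq)$ yields $\binom{q+p-2}{p}=(q-1)(qS+1)$ — are just a careful spelling-out of the same deduction.
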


\begin{proof} Follows from Eq. (\ref{pandqeqn}) for $q\, |\, p-1$.
\end{proof}

\vskip .3cm
\no Both corollaries can be checked using Wilson's theorem, $(p-1)!\equiv -1 \mod p$. 

\section{Homotopy classes of maps}
Our results may be applied to the study of homotopy classes of maps $B\zp \rightarrow BSU(n)$. Theorems proved independently by  W. Dwyer and C. Wilkerson \cite{DW} and by A. Zabrodsky \cite{Z} and D. Notbohm \cite{N} say that homotopy classes of essential maps are in one-to-one correspondence with conjugacy classes of non-constant homomorphisms of $Z_p$ to $SU(n)$. These in turn are determined by conjugacy classes of non-identity elements of prime order $p$. To connect our results with this theory, we make a definition. 

\begin{definition} A homotopy class of essential maps from  $B\zm$ to $BSU(n)$ is called {\normalfont special} if it does not factor through $BSU(n-1)$. 
\end{definition}
Corresponding to the special conjugacy classes of elements counted by Theorem \ref{formulathm}  and Corollary \ref{nqnumber}, we have the  special homotopy classes of maps for $m=p$. Then the following is a restatement of Corollary \ref{nqnumber}.

\begin{corollary}\label{essentialnq} 
For any prime $p$ and any integer $n$, the number of special homotopy classes of essential maps from $B\zp$ to $BSU(n)$ is 
\[   {1\over p} \left [{p+n-2\choose n} + (p-1)\alpha (n,p) \right ]  ,
\]
where  
\[ \alpha (n,p)=\begin{cases} 1 &\mbox{if } p\mid n \; ,\\
-1 & \mbox{if } p \mid n-1 \; , \\
0& \mbox{otherwise.} \end{cases}
\]
In particular, the number of special homotopy classes of essential maps from $B\zp$ to $BSU(p)$ is
 \[ {1\over p} \left [ p-1+ {2p-2\choose p} \right ]. \]
\end{corollary}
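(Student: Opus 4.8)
The plan is to translate the geometric statement into the algebraic count already established, via the classification theorem quoted in the excerpt, and then simply invoke Corollary~\ref{nqnumber}. First I would recall the theorem of Dwyer--Wilkerson \cite{DW} and of Zabrodsky--Notbohm \cite{Z,N}: up to homotopy, essential (i.e.\ non-null-homotopic) maps $B\zp\to BSU(n)$ are in bijection with conjugacy classes of non-constant homomorphisms $\rho\colon\zp\to SU(n)$. Since $p$ is prime, a non-constant homomorphism out of $\zp$ has trivial kernel, hence is injective; so such a $\rho$ is determined by $\rho(g)$ for a fixed generator $g$, an element of $SU(n)$ of order $p$. Thus the right-hand side of the bijection is the set of conjugacy classes of order-$p$ elements of $SU(n)$.

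The substantive point is to identify, under this bijection, the \emph{special} homotopy classes, i.e.\ those that do not factor up to homotopy through $BSU(n-1)$, where $SU(n-1)\hookrightarrow SU(n)$ is the standard block inclusion (a $1$ in the last diagonal entry). By the naturality of the classification with respect to this inclusion, a map $B\zp\to BSU(n)$ factors through $BSU(n-1)$ up to homotopy precisely when the associated $\rho$ is conjugate in $SU(n)$ to a homomorphism with image contained in the block subgroup $SU(n-1)$. For a cyclic subgroup this occurs if and only if the underlying representation of $\zp$ has a trivial one-dimensional summand, i.e.\ if and only if $\rho(g)$ has $1$ as an eigenvalue: in the forward direction one normalizes a fixed vector to obtain the block form, and the reverse direction is immediate. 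Hence the special homotopy classes correspond exactly to conjugacy classes of order-$p$ elements of $SU(n)$ none of whose eigenvalues is $1$ — precisely the classes counted by $N'(SU(n),p)$.

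With this dictionary, the formula is a direct application of Corollary~\ref{nqnumber}, which gives $N'(SU(n),p)=\frac1p\bigl[\binom{p+n-2}{n}+(p-1)\alpha(n,p)\bigr]$ with $\alpha$ as stated; specializing $n=p$ one has $p\mid n$, so $\alpha(p,p)=1$ and $\binom{p+n-2}{n}=\binom{2p-2}{p}$, yielding $\frac1p\bigl[p-1+\binom{2p-2}{p}\bigr]$, which is Eq.~(\ref{ppnumber}). I expect the only step requiring genuine care — the main obstacle — to be the middle paragraph: one must pin down that "factors through $BSU(n-1)$ up to homotopy" is detected algebraically through the naturality square of the Dwyer--Wilkerson correspondence, and that having eigenvalue $1$ is genuinely equivalent to being conjugate into the block subgroup. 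Everything else is formal bookkeeping, and in particular the passage from homomorphisms to elements is harmless here because the excluded element (the identity) has all eigenvalues equal to $1$ and so is automatically absent from the special count.
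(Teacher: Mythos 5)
Your proposal is correct and follows essentially the same route as the paper: the corollary is obtained by translating, via the Dwyer--Wilkerson/Zabrodsky--Notbohm correspondence, the special homotopy classes into the conjugacy classes of order-$p$ elements with no eigenvalue $1$, and then quoting Corollary~\ref{nqnumber}. The only difference is that you spell out the naturality argument identifying ``does not factor through $BSU(n-1)$'' with ``no eigenvalue $1$,'' a step the paper leaves implicit in calling the corollary a restatement.
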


\section{The space $ SU(4)/\z3$}

In this section, let $F=\z3$ be the center of $SU(3)$. Embed $F$ in $SU(4)$ via the standard inclusion of $SU(3)$ in $SU(4)$ given in terms of matrices by 
\[ A \hookrightarrow \begin{pmatrix} A&0 \\ 0&1 \end{pmatrix} . \]
We have a diagram of inclusions of subgroups yielding $S^5$ and $S^7$ as homogeneous spaces in two ways,

\begin{center}
 \raisebox{-0.5\height}{\includegraphics[height=1.5in]{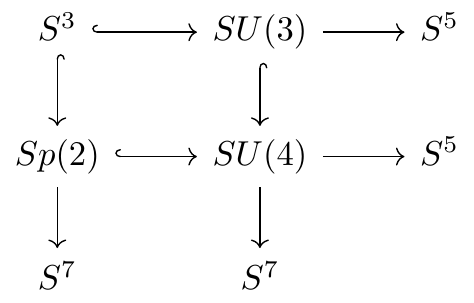}}
\end{center}
\no We regard the spheres as right coset spaces. Taking left coset spaces with respect to $F$, we obtain the diagram

\begin{equation}\label{z3su4diag}
 \raisebox{-0.5\height}{\includegraphics[height=1.4in]{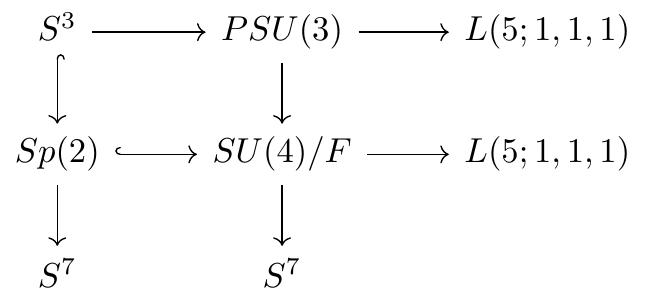}}
\end{equation}
where the lens space $L(5;1,1,1)$ is the quotient of the 5-sphere by the cyclic group of order 3.

In \cite{H} p. 332, it is proven that after localizing at $p=3$, the top row of diagram (\ref{z3su4diag}) splits;  hence $L(5;1,1,1)$ is a 3-local $H$-space and $SU(4)/F$ is 3-locally equivalent to $Sp(2)\times L(5;1,1,1)$ which is a 3-local $H$-space. Rationally, $SU(4)/F$ is $K(\Q,3)\times K(\Q,5)\times K(\Q,7)$ which can only be primitively generated. Since the action by $F$ is via maps homotopic to the identity, the spaces are simple and we may apply localization there to conclude that $SU(4)/F$ is an $H$-space since each of its $p$-localizations are $H$-spaces and each rationalization map is an $H$-map. So we have proved:
\begin{theorem}\label{z3su4} The coset $SU(4)/\z3 $, with $\z3$ embedded via inclusion of the center of $SU(3)$ in $SU(4)$, is an $H$-space.
\end{theorem}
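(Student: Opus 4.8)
The plan is to prove that $SU(4)/F$ is an $H$-space by a localization argument: show that each localization $(SU(4)/F)_{(p)}$ carries an $H$-structure, show that the rationalization does too with an essentially unique such structure, and then reassemble via the arithmetic (fracture) square. First I would record what makes this machinery applicable. Since $SU(4)$ is a compact connected Lie group and $F\cong\z3$ acts freely by left translations, $SU(4)/F$ is a closed smooth manifold, hence a finite complex of finite type; and because left translation by any element of the connected group $SU(4)$ is homotopic to the identity, the deck group $\pi_1(SU(4)/F)\cong\z3$ acts trivially on $\pi_n$ for $n\ge 2$, so $SU(4)/F$ is a simple space. The same remark applies to the lens space $L(5;1,1,1)=S^5/F$ appearing in diagram~(\ref{z3su4diag}). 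Simplicity is exactly what is needed in order to localize these spaces one prime at a time.

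Next I would identify the localizations. At $p=3$ I invoke \cite{H}, p.~332: localized at $3$ the top row of diagram~(\ref{z3su4diag}) splits, so that $L(5;1,1,1)_{(3)}$ is an $H$-space and $(SU(4)/F)_{(3)}\simeq Sp(2)_{(3)}\times L(5;1,1,1)_{(3)}$, a product of $3$-local $H$-spaces (recall $Sp(2)$ is a Lie group), hence itself a $3$-local $H$-space. At any prime $p\neq 3$ the order $|F|=3$ is a unit in $\Z_{(p)}$, so $B\z3$ is $p$-locally contractible; localizing the fibration $SU(4)\to SU(4)/F\to B\z3$ associated with the free $\z3$-action then gives an equivalence $SU(4)_{(p)}\xrightarrow{\ \sim\ }(SU(4)/F)_{(p)}$, and $SU(4)_{(p)}$ is an $H$-space because $SU(4)$ is a Lie group. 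Finally, rationally $SU(4)/F$ has the homotopy type of $SU(4)$, i.e.\ of $K(\Q,3)\times K(\Q,5)\times K(\Q,7)$, which is a product of rational Eilenberg--MacLane spaces and therefore an $H$-space; moreover its rational cohomology is the free graded-commutative algebra on three generators of odd degree, so any $H$-structure on it is forced to be primitively generated on those generators and is therefore unique up to homotopy.

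To conclude I would reassemble the local data: by the fracture theory for $H$-spaces on simple finite-type spaces (Sullivan; Hilton--Mislin--Roitberg), such a space is an $H$-space once each of its $p$-localizations is and the rationalizations of the chosen $H$-structures all agree with a single $H$-structure on the rationalization -- equivalently, once each rationalization map can be made an $H$-map. The $H$-structures found above for $p=3$ (from $Sp(2)_{(3)}\times L(5;1,1,1)_{(3)}$) and for $p\neq 3$ (from $SU(4)_{(p)}$) all rationalize to $H$-structures on $K(\Q,3)\times K(\Q,5)\times K(\Q,7)$, and by the uniqueness just noted these rationalizations coincide, so the coherence condition holds and $SU(4)/F$ is an $H$-space. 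I expect the one step requiring genuine care -- everything else being either cited (\cite{H}) or routine localization bookkeeping -- to be this last coherence check, namely matching the $3$-local $H$-structure coming from the splitting with the non-$3$-local $H$-structures inherited from the Lie group $SU(4)$ after passing to $\Q$; it is precisely the rigidity of the rational $H$-structure on a product of odd-dimensional rational spheres (``which can only be primitively generated'') that makes this matching automatic.
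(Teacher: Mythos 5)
Your proposal is correct and follows essentially the same route as the paper: the $3$-local $H$-structure via the splitting from \cite{H}, the observation that the rationalization is $K(\Q,3)\times K(\Q,5)\times K(\Q,7)$ with primitively generated (hence essentially unique) structure, simplicity of the spaces because the $F$-action is by maps homotopic to the identity, and reassembly by localization/mixing. You merely spell out explicitly the $p\neq 3$ case and the rational coherence check, which the paper leaves implicit.
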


\vskip .5cm
\no {\bf Acknowledgements}

We thank Richard Stanley and Fr\'ed\'eric Chapoton for helpful discussions. We also thank the referee for helpful comments. 


\begin{appendix}
\section{Generators for subgroups of $SU(7)$ of order $7$.}
Below we list generators for the 18 subgroups of $SU(7)$ of order 7 (not including the center). 
A set of 7 integers $[a_1,  \ldots , a_7]$ corresponds to the generator diag$(\zeta ^{a_1}, \ldots , \zeta^{a_7})$ where $\zeta = e^{2\pi i/7}$. 

\vskip .5cm

\no [1, 2, 2, 2, 2, 2, 3], [1, 1, 2, 2, 2, 3, 3], [1, 1, 1, 2, 3, 3, 3], [1, 1, 2, 2, 2, 2, 4], 

\no [1, 1, 1, 2, 2, 3, 4], [1, 1, 1, 1, 3, 3, 4], [2, 3, 3, 3, 3, 3, 4], [2, 2, 3, 3, 3, 4, 4], 

\no  [1, 3, 3, 3, 3, 4, 4], [2, 2, 2, 3, 4, 4, 4], [1, 2, 3, 3, 4, 4, 4], [1, 1, 3, 4, 4, 4, 4],

\no  [1, 1, 1, 1, 2, 3, 5], [2, 2, 2, 3, 3, 4, 5], [1, 2, 3, 3, 3, 4, 5],  [1, 2, 2, 3, 4, 4, 5],

\no [1, 1, 3, 3, 4, 4, 5], [1, 1, 2, 3, 4, 5, 5]. 

\end{appendix}
\renewcommand{\baselinestretch}{1.0}
\normalsize

\end{document}